\numberwithin{equation}{section}
\newtheorem{theorem}{Theorem}[section]
\newtheorem{lemma}[theorem]{Lemma}
\theoremstyle{definition}
\theoremstyle{remark}
\numberwithin{equation}{section}
\newcommand{\bx}{\boldsymbol{x}}
\newcommand{\bu}{\boldsymbol{u}}
\newcommand{\bv}{\boldsymbol{v}}
\newcommand{\ble}{\underline{\boldsymbol{e}}}
\newcommand{\blE}{\underline{\boldsymbol{E}}}
\newcommand{\bR}{\boldsymbol{R}}
\begin{document}

\title{Direct computation of stresses in linear elasticity}

\author{Weifeng Qiu}
\address{Department of Mathematics, City University of Hong Kong, 83 tat Chee Avenue, Kowloon, Hong kong}
\email{weifeqiu@cityu.edu.hk}

\author{Minglei Wang}
\address{Shanghai Starriver bilingual school, China}
\email{minglei.wang@shbs.sh.cn}

\author{Jiahao Zhang}
\address{Department of Mathematics, City University of Hong Kong, 83 tat Chee Avenue, Kowloon, Hong kong}
\email{jiahzhang2-c@my.cityu.edu.hk}
\thanks{Corresponding author: Jiahao Zhang (jiahzhang2-c@my.cityu.edu.hk)}

\thanks{ {\bf Acknowledgements}. Authors would like to thank Professor Philippe G.~Ciarlet  for suggesting 
this topic. The work of Weifeng Qiu was supported by the GRF of Hong Kong (Grant No. 9041980).}


\subjclass[2000]{65N30, 65L12}

\keywords{Linearized elasticity; finite element methods; edge finite elements; computation of stresses.}

\begin{abstract}
We present a new finite element method based on the formulation introduced by Philippe G.~Ciarlet 
and Patrick Ciarlet, Jr. in [{\em Math. Models Methods Appl. Sci., 15 (2005), pp. 259--571}], 
which approximates strain tensor directly. We also show the convergence rate of strain tensor is optimal.
This work is a non-trivial generalization of its two dimensional analogue in 
[{\em Math. Models Methods Appl. Sci., 19 (2009), pp. 1043--1064}]
\end{abstract}

\maketitle

\section{introduction}

This is a continuation of the two-part article proposed by Ciarlet \& Ciarlet, Jr. 
\cite{Ciarlet:2005:strain,Ciarlet:2009:strain}. The main objective of this article is 
to introduce and analyze a direct finite element approximation of the minimization problem 
$j(\underline{\boldsymbol{\varepsilon}})=\inf\limits_{\ble \in \blE(\Omega)}j(\ble)$, which will be introduced 
in $(\ref{new_mini_prob})$, to compute stresses inside an elastic body precisely in three dimensional space. 
The notations and ideas used in this paper to explain our finite element method are due to
Ciarlet \& Ciarlet, Jr. (See \cite{Ciarlet:2009:strain}).

Let $\mathbb{S}^3$ denote the space of all $3\times3$ symmetric matrix. Let $\Omega$ be an open, bounded, connected subset of $\mathbb{R}^3$ with Lipschitz boundary. Let $\underline{\boldsymbol{a}}:\underline{\boldsymbol{b}}$ denote the inner product of two matrices $\underline{\boldsymbol{a}}$ and $\underline{\boldsymbol{b}}$. Now consider a homogeneous, isotropic linearly elastic body with Lame's constants $\lambda>0$ and $\mu>0$, with $\overline{\Omega}$ as its reference configuration, and subjected to applied body forces, of density $\boldsymbol{f}\in L^{\frac{6}{5}}(\Omega;\mathbb{R}^{3})$ in its interior and of 
density $\boldsymbol{g}\in L^{\frac{4}{3}}(\Omega;\mathbb{R}^{3})$ on its boundary $\Gamma$. 
Given any matrix $\ble=(e_{ij})\in{\mathbb{S}^3}$, $A \ble\in{\mathbb{S}^3}$ is defined by 
$$A \ble=\lambda(\text{tr} \ble)\mathbf{I}_{3}+2\mu \ble.$$ 
The classical way to solve pure traction problem of three-dimensional linearized elasticity is to find a displacement vector field 
$\bu\in H^1(\Omega;\mathbb{R}^{3})$ which satisfies $$J(\bu)=\inf_{\bv\in H^1(\Omega;\mathbb{R}^{3})}J(\bv),$$ with
\begin{equation}
J(\bv)=\frac{1}{2}\int_{\Omega}A\bigtriangledown_s \bv:\bigtriangledown_s \bv dx-L(\bv).
\end{equation}
For all $\bv\in H^1(\Omega;\mathbb{R}^{3})$, where $$L(\bv)=\int_{\Omega}\boldsymbol{f}\cdot \bv dx
+\int_{\Gamma}\boldsymbol{g}\cdot \bv d\Gamma,$$ and 
$$\bigtriangledown_s \bv=\frac{1}{2}(\bigtriangledown \bv^{\top}+\bigtriangledown \bv)\in{L^2(\Omega;\mathbb{S}^3)},$$ 
denotes the linearized strain tensor field associated with any vector field $\bv\in H^1(\Omega;\mathbb{R}^{3})$.

If the applied body forces hold for the compatibility condition $L(\bv)=0$ for all $\bv\in \bR(\Omega)$, where 
$$\bR(\Omega)=\{\bv\in H^1(\Omega;\mathbb{R}^{3}): \bigtriangledown_s \bv=0\text{ in }\Omega \}
=\{\bv=\boldsymbol{a}+\boldsymbol{b}\wedge \boldsymbol{x}:\boldsymbol{a},\boldsymbol{b}\in \mathbb{R}^{3}\}.$$
Then $\inf_{\bv\in H^1(\Omega; \mathbb{R}^{3})} J(\bv)>-\infty$. Besides, thanks to the $Korn's\text{ }inequality$, 
the solutions of the minimization problem above exist (See \cite{Duvaut & Lions :1972:strain}) and 
they are unique up to the addition of any vector field $\bv\in{R(\Omega)}$.

Instead of finding vector field $\bu$ directly, Ciarlet \& Ciarlet,Jr (See \cite{Ciarlet:2005:strain}) put forward 
a new method to define a new unknown $\ble$ which is a $d\times d$ symmetric matrix field in $d$-dimension ($d=2,3$). 
It is proved by Ciarlet \& Ciarlet,Jr in \cite{Ciarlet:2005:strain} that in both two and three dimensions (note that, Antman 
\cite{S. S. Antman:1976:strain} already proposed similar idea without proof in 1976 in three-dimensional nonlinear elasticity, 
while ours is concerned about linear elasticity), if $\ble\in \blE(\Omega)$ where 
$\blE(\Omega)=\{\ble|\textbf{ curl }\textbf{curl } \ble=0; \ble\in{L_s}^2(\Omega)\}$, 
then any vector field satisfying $\bigtriangledown_s \bv=\ble$ lies in the set 
$\{\bv|\bv=\dot{\bv}+\boldsymbol{r}\text{ for some } \boldsymbol{r}\in \bR(\Omega)\}$ and the mapping $\kappa:\blE(\Omega)\rightarrow\dot{\bv}\in\dot{H^1}(\Omega;\mathbb{R}^{3})=H^1(\Omega;\mathbb{R}^{3})/\bR(\Omega)$ 
such that $\bigtriangledown_{s}\dot{\bv}=\ble$ is an isomorphism between $\dot{H^1}(\Omega;\mathbb{R}^{3})$ 
and $\blE(\Omega)$ (See \cite{Ciarlet:2007:strain, Ciarlet:20072:strain, Ciarlet:2006:strain, G.Geymonat:2005:strain}). 
Since the relationship between $\ble$ and $\bv$ is clear (we choose $\ble = \bigtriangledown_s \bv$), the minimization 
problem is converted to: 
$$j(\underline{\boldsymbol{\varepsilon}})=\inf_{\ble\in \blE(\Omega)}j(\ble)$$
\begin{equation}
j(\ble)=\frac{1}{2}\int_{\Omega}A\ble:\ble dx-l(\ble),
\end{equation}
for all $\ble\in \blE(\Omega)$, where$$l(\ble)=L\circ\kappa,$$and we can further show that 
$\underline{\boldsymbol{\varepsilon}}=\bigtriangledown_s \bu$.

Let $\Omega$ be a triangulation of a polyhedral domain in $\mathbb{R}^{3}$ and $\blE^h$ be a finite element subspace of $\blE(\Omega)$. 
We consider $\ble^h\in \blE^h$ to be $3\times3$ symmetric matrix with piecewise constant element. Our goal is to illustrate 
that how $\ble^h$ can satisfy the condition
\begin{equation}
\textbf{curl } \textbf{curl } \ble^h=0
\end{equation}

Note that the similar ideas of the mixed finite element methods for linearized elasticity have been discussed in \cite{D.N.Arnold:1984:strain,D.N.Arnold:19842:strain,D.N.Arnold:1988:strain,D.N.Arnold:2002:strain,D.N.Arnold:2003:strain,D.N.Arnold:2005:strain}.

In order to achieve $(1.3)$, the six degrees of freedom that define the elements $\ble^h\in \blE^h$must be supported by edges 
(See Lemma$~\ref{lemma_dof}$) and they must also satisfy specific compatibility conditions (See Theorem$~\ref{def_spaces}, \ref{interior_boundary_vertex}$ and $\ref{computational_space}$). Then the minimization problem can be accomplished. The associated finite elements thus provide examples of edge finite elements in the sense of Nedelec (See \cite{J.C:1980:strain,J.C:1986:strain}).

Finally, we consider discrete problem and it naturally comes to find a discrete matrix field 
$\underline{\boldsymbol{\varepsilon}}^h\in \blE^h$ such that $$j(\underline{\boldsymbol{\varepsilon}}^h)
=\inf_{\ble^h\in \blE^h}j(\ble^h).$$ has unique solution (See Theorem$~\ref{dis_mini_pro}$), and we establish 
convergence of the method (See Theorem$~\ref{convergence_pro}$). 

Note that this approach is in a sense the "matrix-analog" of the approximation of the Stokes problem by means of the divergence-free finite elements of Crouzeix \& Raviart (although theirs are non-conforming, whereas ours are not).

In the following part of the paper, the details of the approach and its validation will be discussed in section 2. Then the computational space is constructed in section 3. As it showed above, the final section will be the discussion of discrete problem and the convergence of the method.

\section{Three dimensional linearized elasticity}

The basic notations and conceptions are introduced in section two of [1]. Let $x_{i}$ denote the coordinates of a point $\bx\in R^3$, let $\partial_{i}:=\partial /\partial x_{i}$ and $\partial_{ij}:=\partial^2/\partial x_{i}\partial x_{j}$, $i,j\in \{1,2,3\}$. Given a smooth enough vector field $\bv$, we define the $3\times3$ matrix field $\bigtriangledown \bv:=(\partial_{i} v_{j})$. In this paper, we first consider the domain which is open, bounded and connected subset of $\mathbb{R}^3$ with Lipschitz-continuous boundary. Let $\Omega$ be a  domain in $\mathbb{R}^3$. Given any vector field $\bv\in H^1(\Omega; \mathbb{R}^3)$ as a displacement field, define
\begin{equation}
\bigtriangledown_s \bv:=\frac{1}{2}(\bigtriangledown \bv^{\top}+\bigtriangledown \bv)
\in L_s^2(\Omega):=L^2(\Omega;\mathbb{S}^3),
\end{equation}
as its associated symmetrized gradient matrix field. Let
\begin{equation}
\label{R_space}
\bR(\Omega):=\{\boldsymbol{r}\in H^1(\Omega;\mathbb{R}^{3});\bigtriangledown_s \boldsymbol{r}=0\}
\end{equation}
denote the space of infinitesimal rigid displacement fields. It is obvious that 
$\boldsymbol{r}\in \bR(\Omega)$ if and only if there exists a constant vector $\boldsymbol{c}\in\mathbb{R}^{3}$ 
and a matrix $B$ satisfying $B^{\top}=-B$ such that $$\boldsymbol{r}=B\boldsymbol{x}+\boldsymbol{c},$$ 
for all $\boldsymbol{x}=(x_1,x_2,x_3)\in\Omega$.

It is discussed in \cite{Ciarlet:2009:strain} that the associated pure traction minimization problem
\begin{equation}
\label{classical_mini_prob}
J(\bu)=\inf_{\bv\in H^1(\Omega;\mathbb{R}^{3})}J(\bv), \text{ where } J(\bv)=
\frac{1}{2}\int_{\Omega}A\bigtriangledown_s \bv:\bigtriangledown_s \bv dx-L(\bv).
\end{equation}
with $$L(\bv)=\int_{\Omega}\boldsymbol{f}\cdot \bv dx+\int_{\Gamma}\boldsymbol{g}\cdot \bv d\Gamma,$$ 
has one and only one solution if it satisfies the compatibility condition $L(\boldsymbol{r})=0$ 
for all $\boldsymbol{r}\in \bR(\Omega)$, which can be supported by Korn's inequality.

Instead of seeking the displacement field $\bu$ in the classical approach, the intrinsic approach views the linearized 
strain tensor field $\underline{\boldsymbol{\varepsilon}}:=\bigtriangledown_s \bu$ directly as the primary unknown. 
This changing of variables can be accomplished with the following results (See Theorem $2.1$,$2.2$ and $2.3$), 
which were proved in \cite{Ciarlet:2005:strain} by Ciarlet and Ciarlet,Jr.

\begin{theorem}
Let $\Omega$ be a simply-connected domain in $\mathbb{R}^3$ and let $\ble=(e_{ij})\in L_s^2(\Omega)$ be 
a tensor field that satisfies $$\textbf{curl } \textbf{curl }\ble=0\text{ in }H^{-2}(\Omega;\mathbb{R}^{3\times 3}).$$ 
Then there exists a vector field $\bv\in H^1(\Omega;\mathbb{R}^{3})$ such that $\bigtriangledown_s \bv= \ble$ 
in $L_s^2(\Omega)$, and all the other solutions $\overline{\bv}$ 
are of the form $$\overline{\bv}=\bv+\boldsymbol{r}\text{ for some } \boldsymbol{r}\in \bR(\Omega),$$ 
where $\bR(\Omega)$ is the space defined in $(\ref{R_space})$.
\end{theorem}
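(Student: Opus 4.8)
The plan is to prove this as a statement about the de Rham–type complex underlying linearized elasticity, namely that the kernel of the $\mathrm{curl}\,\mathrm{curl}$ operator on symmetric tensor fields coincides with the range of the symmetrized gradient. The first step is to establish \emph{uniqueness up to rigid displacements}: if $\bv_1,\bv_2\in H^1(\Omega;\mathbb{R}^3)$ both satisfy $\bigtriangledown_s\bv_i=\ble$, then $\bigtriangledown_s(\bv_1-\bv_2)=0$, so $\bv_1-\bv_2\in\bR(\Omega)$ by the very definition $(\ref{R_space})$; and since $\bR(\Omega)$ is exactly the space of fields $B\bx+\boldsymbol{c}$ with $B^\top=-B$, every other solution differs from a fixed one by such a field. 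This direction is immediate and contains no real difficulty.

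The substantive part is \emph{existence}. The route I would take is the classical Cesàro–Volterra path integral construction, which is where simple-connectedness enters. Given a symmetric $\ble=(e_{ij})\in L_s^2(\Omega)$ with $\textbf{curl }\textbf{curl }\ble=0$, one wants to build an antisymmetric matrix field $\boldsymbol{\omega}=(\omega_{ij})$ playing the role of the infinitesimal rotation, via $\partial_k\omega_{ij}=\partial_i e_{jk}-\partial_j e_{ik}$ (the Saint-Venant-type formula). The compatibility condition $\textbf{curl }\textbf{curl }\ble=0$ is precisely what guarantees that the right-hand side, viewed as a $1$-form with values in the space of antisymmetric matrices, is closed; simple-connectedness of $\Omega$ then lets one integrate it to obtain $\boldsymbol{\omega}$, unique up to an additive constant antisymmetric matrix. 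Having $\boldsymbol{\omega}$, one sets $\bigtriangledown\bv:=\ble+\boldsymbol{\omega}$ and checks that this matrix field is itself curl-free (again using the defining relation for $\partial_k\omega_{ij}$), so a second path integration — legitimate once more because $\Omega$ is simply connected — produces $\bv$ with $\bigtriangledown\bv=\ble+\boldsymbol{\omega}$, whence $\bigtriangledown_s\bv=\ble$ since $\boldsymbol{\omega}$ is antisymmetric.

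The main obstacle is carrying out this argument at the stated \emph{low regularity}: $\ble$ is merely $L^2$ and the hypothesis $\textbf{curl }\textbf{curl }\ble=0$ holds only in $H^{-2}(\Omega;\mathbb{R}^{3\times3})$, so the pointwise path-integral manipulations above are purely formal and must be replaced by a functional-analytic argument. The clean way to do this is a duality/distributional version: the condition says $\ble$ annihilates, in the appropriate pairing, all symmetric tensor fields of the form $\textbf{curl }\textbf{curl }$ of something, and one must identify the annihilator of this range inside $L_s^2(\Omega)$ with the space $\{\bigtriangledown_s\bv:\bv\in H^1\}$. Equivalently, one invokes the exactness of the elasticity complex on a simply connected domain — a generalized Poincaré lemma for the $\bigtriangledown_s$–$\textbf{curl }\textbf{curl }$ pair — together with a closed-range (Nečas/Peetre–Tartar) argument to handle the weak regularity. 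Since the excerpt explicitly attributes this theorem to Ciarlet \& Ciarlet, Jr.\ and lists several companion references (\cite{Ciarlet:2007:strain,Ciarlet:20072:strain,Ciarlet:2006:strain,G.Geymonat:2005:strain}), I would at this point cite their proof for the full low-regularity statement rather than reproduce it, having indicated above the smooth-case geometry that drives it. The remaining bookkeeping — that the constructed $\bv$ indeed lies in $H^1(\Omega;\mathbb{R}^3)$ and that the ambiguity in the two integrations is exactly a rigid displacement — then matches the uniqueness claim established in the first step, completing the proof.
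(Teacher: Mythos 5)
Your treatment is consistent with the paper, which states this theorem without proof and attributes it to Ciarlet \& Ciarlet, Jr.\ \cite{Ciarlet:2005:strain}: your uniqueness step is exactly the definitional one via $\bR(\Omega)$, and deferring the low-regularity existence to that reference is precisely what the paper itself does. The Ces\`aro--Volterra/Saint-Venant sketch you add is a faithful smooth-case picture of the cited distributional proof, which indeed replaces the path integrations by a weak Poincar\'e lemma on a simply-connected domain combined with a Lions/Ne\v{c}as-type closed-range argument of the kind you allude to.
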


\begin{theorem}
Let $\Omega$ be a simply-connected domain in $\mathbb{R}^3$. Define the space
\begin{equation}
\blE(\Omega):=\{\ble\in L_s^2(\Omega);\textbf{curl } \textbf{curl } \ble=0\text{ in }H^{-2}(\Omega;\mathbb{R}^{3\times 3})\},
\end{equation}
and given any $\ble\in \blE(\Omega)$, let $\dot{\bv}=\kappa(\ble)$ denote the unique element in the space 
$\dot{H}^1(\Omega;\mathbb{R}^{3})$ that satisfies $\ble=\bigtriangledown_s\dot{\bv}$ (See Theorem $2.1$). 
Then the linear mapping $$\kappa:\blE(\Omega)\rightarrow\dot{H}^1(\Omega;\mathbb{R}^{3})$$ is an isomorphism 
between the Hilbert space $\blE(\Omega)$ and $\dot{H}^1(\Omega;\mathbb{R}^{3})$.
\end{theorem}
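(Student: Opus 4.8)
The plan is to check that $\kappa$ is a bounded linear bijection with bounded inverse; since $\blE(\Omega)$ is a closed subspace of $L_s^2(\Omega)$ --- because $\curl\curl$ maps $L_s^2(\Omega)$ continuously into $H^{-2}(\Omega;\RRR^{3\times 3})$, so its kernel is closed --- and $\dot{H}^1(\Omega;\RRR^3)=H^1(\Omega;\RRR^3)/\bR(\Omega)$ is the quotient of a Hilbert space by the finite-dimensional (hence closed) subspace $\bR(\Omega)$, both are Hilbert spaces and this is exactly the stated isomorphism. Theorem 2.1 already supplies the two structural facts: for each $\ble\in\blE(\Omega)$ there is a $\bv\in H^1(\Omega;\RRR^3)$ with $\bigtriangledown_s\bv=\ble$, unique up to an element of $\bR(\Omega)$, so the class $\dot{\bv}=\kappa(\ble)$ is well defined, and $\kappa$ is linear because $\bigtriangledown_s$ is.

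Injectivity is immediate: if $\kappa(\ble)$ is the zero class, any representative $\bv$ lies in $\bR(\Omega)$, so $\ble=\bigtriangledown_s\bv=0$. Surjectivity rests on the classical Saint-Venant compatibility identity: given $\dot{\bv}\in\dot{H}^1(\Omega;\RRR^3)$, choose a representative $\bv$ and put $\ble:=\bigtriangledown_s\bv\in L_s^2(\Omega)$; a direct computation in the sense of distributions shows $\curl\curl(\bigtriangledown_s\bv)=0$ in $H^{-2}(\Omega;\RRR^{3\times 3})$, hence $\ble\in\blE(\Omega)$, and by the uniqueness clause of Theorem 2.1 one gets $\kappa(\ble)=\dot{\bv}$. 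Boundedness of $\kappa^{-1}$ is likewise elementary: $\kappa^{-1}(\dot{\bv})=\bigtriangledown_s\bv$ for any representative, and since $\|\bigtriangledown_s\bv\|_{L_s^2(\Omega)}\le\|\bv\|_{H^1(\Omega;\RRR^3)}$ is unchanged when $\bv$ is shifted by $\bR(\Omega)$, taking the infimum over representatives yields $\|\kappa^{-1}(\dot{\bv})\|_{L_s^2(\Omega)}\le\|\dot{\bv}\|_{\dot{H}^1(\Omega;\RRR^3)}$.

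The substantive step --- and the one I expect to be the main obstacle --- is boundedness of $\kappa$ itself, i.e. the estimate $\|\dot{\bv}\|_{\dot{H}^1(\Omega;\RRR^3)}\le C\,\|\ble\|_{L_s^2(\Omega)}$ with $\ble=\bigtriangledown_s\bv$. This is Korn's inequality in quotient form. I would deduce it from the second Korn inequality $\|\bv\|_{H^1(\Omega;\RRR^3)}^2\le C\big(\|\bv\|_{L^2(\Omega;\RRR^3)}^2+\|\bigtriangledown_s\bv\|_{L_s^2(\Omega)}^2\big)$ on the bounded Lipschitz domain $\Omega$, together with a compactness argument (the Lions lemma / Peetre--Tartar lemma): working in the $H^1$-orthogonal complement $V$ of $\bR(\Omega)$, if the inequality failed on $V$ there would be $\bv_n\in V$ with $\|\bv_n\|_{H^1}=1$ and $\|\bigtriangledown_s\bv_n\|_{L_s^2}\to0$; by the compact embedding $H^1(\Omega;\RRR^3)\hookrightarrow L^2(\Omega;\RRR^3)$ and the second Korn inequality $(\bv_n)$ would be Cauchy in $H^1$, its limit $\bv$ would satisfy $\bigtriangledown_s\bv=0$, so $\bv\in\bR(\Omega)\cap V=\{0\}$, contradicting $\|\bv\|_{H^1}=1$. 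Writing a general $\bv=\bv_V+\boldsymbol{r}$ with $\bv_V\in V$, $\boldsymbol{r}\in\bR(\Omega)$ and using $\bigtriangledown_s\bv=\bigtriangledown_s\bv_V$ then gives the quotient estimate. With $\kappa$ and $\kappa^{-1}$ both bounded, $\kappa$ is the asserted isomorphism of Hilbert spaces; one could instead invoke the bounded inverse theorem once $\kappa$ is known to be a bounded bijection, but the explicit Korn constant will be needed anyway for the later error analysis.
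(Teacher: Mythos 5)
Your argument is sound, but it takes a different route from the one the paper relies on. The paper itself gives no proof at all for this theorem: it simply cites Ciarlet and Ciarlet, Jr.\ (the 2005 reference), whose proof runs in the opposite logical direction from yours. There, one shows that the map $\dot{\bv}\mapsto\bigtriangledown_s\dot{\bv}$ from $\dot{H}^1(\Omega;\mathbb{R}^3)$ onto $\blE(\Omega)$ is a continuous linear bijection (continuity is your easy estimate; surjectivity and injectivity are exactly Theorem 2.1, which rests on the J.~L.~Lions lemma on distributions with $H^{-1}$ derivatives), and since $\blE(\Omega)$ is closed in $L_s^2(\Omega)$ and $\dot{H}^1$ is a Hilbert space, the open mapping (bounded inverse) theorem immediately yields continuity of $\kappa$ --- no Korn inequality is used; indeed the whole point of that reference, reflected in its title, is that Korn's inequality then \emph{follows} from this isomorphism. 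You instead prove continuity of $\kappa$ directly via the second Korn inequality plus the standard compactness (Peetre--Tartar) argument in the $H^1$-orthogonal complement of $\bR(\Omega)$. That is a perfectly valid proof --- Korn's inequality has classical proofs independent of this theorem, so there is no circularity --- and it buys you a quantitative constant usable later in the error analysis, at the price of invoking heavier machinery than the soft functional-analytic argument needs. One small correction to your closing remark: the cleaner soft alternative is to apply the bounded inverse theorem to $\kappa^{-1}$, which is the map you have already shown to be a bounded bijection from $\dot{H}^1(\Omega;\mathbb{R}^3)$ onto $\blE(\Omega)$; applying it ``once $\kappa$ is known to be a bounded bijection'' would only recover boundedness of $\kappa^{-1}$, which you already have for free.
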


\begin{theorem}
Let $\Omega$ be a simply-connected domain in $\mathbb{R}^3$. Then the minimization problem: 
Find $\underline{\boldsymbol{\varepsilon}}\in \blE(\Omega)$ such that
\begin{equation}
\label{new_mini_prob}
j(\underline{\boldsymbol{\varepsilon}})=\inf_{\ble\in \blE(\Omega)}j(\ble), \text{ where }j(\ble)
=\frac{1}{2}\int_{\Omega}A\ble:\ble dx-l(\ble),
\end{equation}
with $$l(\ble)=L\circ\kappa,$$ has one and only one solution $\underline{\boldsymbol{\varepsilon}}$ 
and $$\underline{\boldsymbol{\varepsilon}}:=\bigtriangledown_s \bu$$ where $\bu$ is the unique solution 
to problem $(\ref{classical_mini_prob})$.
\end{theorem}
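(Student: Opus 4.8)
The plan is to transport problem $(\ref{new_mini_prob})$ from $\blE(\Omega)$ to the quotient space $\dot{H}^1(\Omega;\mathbb{R}^3)=H^1(\Omega;\mathbb{R}^3)/\bR(\Omega)$ by means of the isomorphism $\kappa$ of Theorem~$2.2$. Under this transport the functional $j$ becomes exactly the functional $J$ of the classical pure-traction problem $(\ref{classical_mini_prob})$, so that existence, uniqueness, and the identity $\underline{\boldsymbol{\varepsilon}}=\bigtriangledown_s\bu$ can all be read off from the classical theory.

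The first step is to record the algebraic identity linking $j$ and $J$. The compatibility condition forces $L(\boldsymbol{r})=0$ for every $\boldsymbol{r}\in\bR(\Omega)$, and since $\bigtriangledown_s(\bv+\boldsymbol{r})=\bigtriangledown_s\bv$ the functional $J$ is constant on each coset $\bv+\bR(\Omega)$; hence $J$ descends to a well-defined functional $\dot{J}$ on $\dot{H}^1(\Omega;\mathbb{R}^3)$, which is precisely what makes $l(\ble)=L\circ\kappa$ meaningful. Now take $\ble\in\blE(\Omega)$ and, by Theorem~$2.1$, pick $\bv\in H^1(\Omega;\mathbb{R}^3)$ with $\bigtriangledown_s\bv=\ble$, so that $\kappa(\ble)=\dot{\bv}$; then $j(\ble)=\frac{1}{2}\int_\Omega A\ble:\ble\,dx-L(\bv)=J(\bv)=\dot{J}(\kappa(\ble))$, the value being independent of the chosen representative $\bv$. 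Conversely, for any $\bv\in H^1(\Omega;\mathbb{R}^3)$ one has $\textbf{curl }\textbf{curl }\bigtriangledown_s\bv=0$, hence $\bigtriangledown_s\bv\in\blE(\Omega)$ with $\kappa(\bigtriangledown_s\bv)=\dot{\bv}$, so $J(\bv)=j(\bigtriangledown_s\bv)$. Therefore $j=\dot{J}\circ\kappa$ on $\blE(\Omega)$.

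Because $\kappa$ is a bijection from $\blE(\Omega)$ onto $\dot{H}^1(\Omega;\mathbb{R}^3)$, the two infima coincide and the minimizers are in one-to-one correspondence: $\underline{\boldsymbol{\varepsilon}}$ minimizes $j$ over $\blE(\Omega)$ if and only if $\kappa(\underline{\boldsymbol{\varepsilon}})$ minimizes $\dot{J}$, equivalently if and only if some, hence every, representative of $\kappa(\underline{\boldsymbol{\varepsilon}})$ minimizes $J$ over $H^1(\Omega;\mathbb{R}^3)$. The classical problem $(\ref{classical_mini_prob})$, which is well-posed under the compatibility condition thanks to Korn's inequality, admits a minimizer $\bu$, and its complete set of minimizers is the single coset $\bu+\bR(\Omega)=\dot{\bu}$. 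Pulling back, $\underline{\boldsymbol{\varepsilon}}:=\kappa^{-1}(\dot{\bu})$ is the unique minimizer of $j$, and by the defining property of $\kappa$ together with the fact that $\bigtriangledown_s$ annihilates $\bR(\Omega)$ we obtain $\underline{\boldsymbol{\varepsilon}}=\bigtriangledown_s\bu$. Uniqueness can also be seen directly: if $\ble_1$ and $\ble_2$ both minimize $j$ and $\bigtriangledown_s\bv_i=\ble_i$, then each $\bv_i$ minimizes $J$, so $\bv_1-\bv_2\in\bR(\Omega)$ and therefore $\ble_1=\bigtriangledown_s\bv_1=\bigtriangledown_s\bv_2=\ble_2$.

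I expect no serious obstacle beyond what Theorems~$2.1$--$2.2$ and the classical theory already provide; the remaining points are bookkeeping. One must verify that $l$, and hence $j$, is well-defined and finite on $\blE(\Omega)$, which rests on $L|_{\bR(\Omega)}=0$ together with the $H^1$-boundedness of $L$ --- the latter following from $\boldsymbol{f}\in L^{6/5}(\Omega;\mathbb{R}^3)$, $\boldsymbol{g}\in L^{4/3}(\Gamma;\mathbb{R}^3)$ and the embeddings $H^1(\Omega)\hookrightarrow L^6(\Omega)$ and $H^1(\Omega)\to L^4(\Gamma)$ --- and one must check that the value $j(\ble)=J(\bv)$ is independent of the choice of $\bv$ with $\bigtriangledown_s\bv=\ble$. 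The substance of the argument is the translation through the isomorphism rather than a fresh coercivity or compactness estimate; the one place to be careful is phrasing the minimizer correspondence cleanly across the quotient so that the identification $\underline{\boldsymbol{\varepsilon}}=\bigtriangledown_s\bu$ is unambiguous.
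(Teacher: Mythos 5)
Your argument is correct and is essentially the same as the one the paper relies on: the paper simply cites Ciarlet \& Ciarlet, Jr.\ (2005) for Theorems 2.1--2.3, and the proof there is exactly this transport of the minimization problem through the isomorphism $\kappa$, using $j=\dot{J}\circ\kappa$, the compatibility condition $L|_{\bR(\Omega)}=0$, and Korn's inequality for the well-posedness of the classical problem. Your bookkeeping (well-definedness of $l$ via the stated integrability of $\boldsymbol{f},\boldsymbol{g}$, and the identity $\textbf{curl}\,\textbf{curl}\,\bigtriangledown_s\bv=0$ giving surjectivity of $\bv\mapsto\bigtriangledown_s\bv$ onto $\blE(\Omega)$) is exactly what is needed, so no gap remains.
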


\section{Finite element space for strain tensor}

Same as Ciarlet \& Ciarlet, Jr. did in \cite{Ciarlet:2009:strain}, we first describe a triangular finite element, 
which provides edge finite element. The length element is denote $dl$.

\begin{lemma}
\label{lemma_dof}
Let $T$ be a non-degenerate tetrahedron with edges $s_{i},1\leq i\leq 6$. Given any edge $s_{i}$ of $T$,
let $\tau^{i}$ denote a unit vector parallel to $s_{i}$, and let the degrees of freedom $d_{i},1\leq i\leq 6$,
be defined as 
\begin{equation}
\label{dof}
d_{i}(\ble):=\int_{s_{i}}\tau^{i}\cdot \ble\tau^{i}dl\qquad \forall \ble\in P_{0}(T;\mathbb{S}^{3}).
\end{equation}
Then, the set $\{d_{i};1\leq i\leq 6\}$ is $P_{0}(T;\mathbb{S}^{3})$-unisolvent, i.e. a tensor field $\ble
\in P_{0}(T;\mathbb{S}^{3})$ is uniquely determined by the six numbers $d_{i}(\ble),1\leq i\leq 6$.
\end{lemma}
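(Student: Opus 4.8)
The plan is to verify $P_0(T;\SSS^3)$-unisolvence by a dimension count together with an injectivity argument. Since $\dim P_0(T;\SSS^3) = 6$ (a constant symmetric $3\times 3$ matrix has six independent entries) and there are exactly six functionals $d_i$, it suffices to prove that the linear map $\ble \mapsto (d_1(\ble),\dots,d_6(\ble))$ from $P_0(T;\SSS^3)$ to $\RRR^6$ is injective; equivalently, if $\ble$ is a constant symmetric matrix with $\tau^i\cdot\ble\tau^i = 0$ for all six edge directions $\tau^i$, then $\ble = 0$. Note that on a constant tensor the integral $\int_{s_i}\tau^i\cdot\ble\tau^i\,dl$ equals $|s_i|\,(\tau^i\cdot\ble\tau^i)$, so the length factors are irrelevant and the condition reduces to the pointwise vanishing of the quadratic form $\vq \mapsto \vq^\top \ble \vq$ along the six edge directions.

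First I would set up coordinates: let $a_0,a_1,a_2,a_3$ be the vertices of $T$, and take the six edge vectors $a_j - a_k$ for $0 \le k < j \le 3$. The key algebraic fact to isolate is that these six vectors, viewed as directions, are exactly rich enough to pin down a symmetric bilinear form. Concretely, I would argue as follows. From the four edges emanating in the pattern $e_1 = a_1-a_0$, $e_2 = a_2-a_0$, $e_3 = a_3-a_0$ (three of them) plus $e_1 - e_2 = a_1 - a_2$, $e_2 - e_3 = a_2 - a_3$, $e_1 - e_3 = a_1 - a_3$ (the other three), the vanishing conditions $e_i^\top \ble\, e_i = 0$ for $i=1,2,3$ and $(e_i - e_j)^\top \ble\,(e_i - e_j) = 0$ give, upon expanding the latter, $e_i^\top \ble\, e_j = 0$ for all $i \ne j$ as well. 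Hence $e_i^\top \ble\, e_j = 0$ for all $1 \le i,j \le 3$. Since $T$ is non-degenerate, $\{e_1,e_2,e_3\}$ is a basis of $\RRR^3$, so the matrix of the bilinear form $(\vq,\vr) \mapsto \vq^\top \ble\,\vr$ in this basis is zero, which forces $\ble = 0$. This establishes injectivity, and combined with the equality of dimensions, unisolvence follows.

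The step I expect to be the only genuine (though mild) obstacle is the bookkeeping that the six edges of a tetrahedron really do split into a "star" of three edges at one vertex plus the three "opposite" edges realized as differences of the star edges — i.e. confirming that among $\{a_j - a_k\}$ one finds all three $e_i$ and all three $e_i - e_j$ simultaneously. This is immediate from the combinatorics of $K_4$: the three edges $a_0a_1, a_0a_2, a_0a_3$ are the star at $a_0$, and the remaining three edges $a_1a_2, a_1a_3, a_2a_3$ are precisely $e_2 - e_1$, $e_3 - e_1$, $e_3 - e_2$ (up to sign, which is irrelevant for a quadratic form). Once this identification is made explicit the polarization identity does the rest, and non-degeneracy of $T$ — equivalently linear independence of $e_1,e_2,e_3$ — closes the argument. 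I would also remark that the same computation shows the $d_i$ are linearly independent as functionals on $P_0(T;\SSS^3)$, which is another equivalent phrasing of unisolvence.
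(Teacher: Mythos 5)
Your proof is correct, and it is essentially the argument the paper has in mind: the paper itself gives no details, deferring to Theorem 3.1 of the planar paper of Ciarlet \& Ciarlet, Jr., whose proof is exactly this polarization argument (vanishing of the quadratic form $\vq\mapsto \vq^{\top}\ble\,\vq$ on the star edges at one vertex and on the opposite edges, which are their differences, forces all entries $e_i^{\top}\ble\, e_j$ to vanish in the basis given by non-degeneracy). Your write-up simply supplies, in the three-dimensional setting, the self-contained detail that the paper omits, including the correct dimension count $\dim P_{0}(T;\mathbb{S}^{3})=6$ matching the six edge functionals.
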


\begin{proof}
The proof will be the same as that of Theorem $3.1$ in \cite{Ciarlet:2009:strain}.
\end{proof}

From now on, $\Omega$ denotes a polynomial domain in $\mathbb{R}^{3}$, and we consider triangulations $\mathcal{T}^{h}$ 
of the set $\bar{\Omega}$ by tetrahedrons $T\in\mathcal{T}^{h}$ subjected to the usual conditions; in particular, all 
the tetrahedrons $T\in\mathcal{T}^{h}$ are non-degenerate.

Given such a triangulation $\mathcal{T}^{h}$ of $\bar{\Omega}$, let $\Sigma^{h}$ denote the set of all ``interior" edges found 
in $\mathcal{T}^{h}$ (i.e. that are not contained in the boundary $\partial\Omega$), let $\Sigma_{\partial}^{h}$ denote the set 
of all ``boundary" edges found in $\mathcal{T}^{h}$ (1.e. that are contained in $\partial\Omega$), let $A^{h}$ denote the 
set of all ``interior" vertices in $\mathcal{T}^{h}$ (i.e. that are contained in $\Omega$), and let $A_{\partial}^{h}$ denote the 
set of all ``boundary" vertices in $\mathcal{T}^{h}$ (i.e. that are contained in $\partial\Omega$).

We also assume that each interior or boundary edge $\sigma\in \Sigma^{h}\cup \Sigma_{\partial}^{h}$ is oriented. 
For instance, if $\{\boldsymbol{a}^{j};1\leq j\leq J\}$ denotes the set of all the vertices found in $\mathcal{T}^{h}$ 
and $\sigma=[\boldsymbol{a}^{i},\boldsymbol{a}^{j}]\in\Sigma^{h}\cup \Sigma_{\partial}^{h}$ with $i<j$, 
one may let $\boldsymbol{\tau}:\vert \boldsymbol{a}^{j}-\boldsymbol{a}^{i} \vert^{-1}
(\boldsymbol{a}^{j}-\boldsymbol{a}^{i})$.

\begin{theorem}
\label{def_spaces}
Given any triangulation $\mathcal{T}^{h}$ of $\bar{\Omega}$, define the finite element space 
\begin{align}
\label{source_space}
\tilde{\underline{\mathbb{E}}}^{h}:=\lbrace & \ble^{h}\in\mathbb{L}_{s}^{2}(\Omega);\ble^{h}|_{T}
\in P_{0}(T;\mathbb{S}^{3}) \text{ for all }T\in\mathcal{T}^{h}, \text{ and for all }\sigma\in\Sigma^{h}, \\
\nonumber  
& \int_{\sigma}\boldsymbol{\tau}\cdot (\bold{e}^{h}|_{T})\boldsymbol{\tau} dl
\text{ is single-valued for all } T\in\mathcal{T}^{h}\text{ with } \sigma\subset T
\rbrace .
\end{align}
Then each tensor field $\ble^{h}\in\tilde{\underline{\mathbb{E}}}^{h}$ is uniquely defined by the numbers 
$d_{\sigma}(\ble^{h}),\sigma\in\Sigma^{h}\cup\Sigma_{\partial}^{h}$ where the degrees of freedom 
$d_{\sigma}:\tilde{\underline{\mathbb{E}}}^{h}\rightarrow\mathbb{R}$ are defined by 
\begin{equation}
\label{global_edge_dof}
d_{\sigma}(\ble^{h}) = \int_{\sigma}\boldsymbol{\tau}\cdot (\ble^{h}|_{T})\boldsymbol{\tau} dl 
\text{ for any }T\in\mathcal{T}^{h}\text{ with } \sigma\subset T.
\end{equation}
 
Define the finite element space 
\begin{equation}
\label{exact_space}
\hat{\underline{\mathbb{E}}}^{h}:=\{\bigtriangledown_{s}\dot{\bv}^{h}\in\mathbb{L}_{s}^{2}(\Omega);\dot{\bv}^{h}
\in\dot{\boldsymbol{V}}^{h}\},
\end{equation} 
where, the space $\bR(\Omega):=\{\bv=\boldsymbol{a}+\boldsymbol{b}
\wedge\boldsymbol{x}; \boldsymbol{a},\boldsymbol{b}\in\mathbb{R}^{3}\}$,
\begin{equation}
\label{V_space}
\dot{\boldsymbol{V}}^{h}:=\boldsymbol{V}^{h}/\bR(\Omega)\text{ with }
\boldsymbol{V}^{h}:=\{\bv^{h}\in\boldsymbol{C}^{0}(\bar{\Omega});
\bv^{h}|_{T}\in P_{1}(T;\mathbb{R}^{3})\}.
\end{equation}
Then,
\begin{equation}
\label{space_inclusion}
\hat{\underline{\mathbb{E}}}^{h}\subset \tilde{\underline{\mathbb{E}}}^{h}.
\end{equation}
Besides,
\begin{equation}
\label{dim_exact_space}
\dim \hat{\underline{\mathbb{E}}}^{h} = \dim \boldsymbol{V}^{h} - 6.
\end{equation}
\end{theorem}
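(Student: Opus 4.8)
The statement has three parts: the inclusion $\hat{\underline{\mathbb{E}}}^{h}\subset \tilde{\underline{\mathbb{E}}}^{h}$, the unisolvence claim (each $\ble^h\in\tilde{\underline{\mathbb{E}}}^h$ is determined by the edge degrees of freedom $d_\sigma$), and the dimension count $\dim \hat{\underline{\mathbb{E}}}^{h} = \dim \boldsymbol{V}^{h} - 6$. The plan is to treat them in that order, since the inclusion is needed to make sense of the whole construction and the dimension count will lean on it.

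First I would prove the inclusion. Take $\dot{\bv}^h\in\dot{\boldsymbol{V}}^h$; since $\bv^h|_T\in P_1(T;\mathbb{R}^3)$ on each tetrahedron, the gradient $\bigtriangledown \bv^h|_T$ is constant, hence $\bigtriangledown_s\dot{\bv}^h|_T\in P_0(T;\mathbb{S}^3)$, which is the first defining condition of $\tilde{\underline{\mathbb{E}}}^h$. For the single-valuedness condition: fix an interior edge $\sigma$ with unit tangent $\boldsymbol{\tau}$, and let $T_1,T_2$ be two tetrahedra sharing $\sigma$. Because $\bv^h\in\boldsymbol{C}^0(\bar\Omega)$, its restriction to the common edge $\sigma$ agrees from both sides; parametrizing $\sigma$ by arclength along $\boldsymbol{\tau}$, the tangential derivative $\boldsymbol{\tau}\cdot\nabla(\bv^h\cdot\boldsymbol{\tau})$ along $\sigma$ is the same computed from $T_1$ or from $T_2$, and one checks $\boldsymbol{\tau}\cdot(\bigtriangledown_s\bv^h)\boldsymbol{\tau} = \boldsymbol{\tau}\cdot(\bigtriangledown \bv^h)\boldsymbol{\tau} = \partial_{\boldsymbol{\tau}}(\bv^h\cdot\boldsymbol{\tau})$. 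Hence $\int_\sigma \boldsymbol{\tau}\cdot(\bigtriangledown_s\dot{\bv}^h|_{T})\boldsymbol{\tau}\,dl$ is independent of the choice of $T\supset\sigma$, which is exactly the second condition. This gives $(\ref{space_inclusion})$. The unisolvence of the $d_\sigma$ on $\tilde{\underline{\mathbb{E}}}^h$ then follows by assembling the local result of Lemma~\ref{lemma_dof}: on each $T$, the six edge functionals determine $\ble^h|_T\in P_0(T;\mathbb{S}^3)$ uniquely, and the single-valuedness built into $(\ref{source_space})$ guarantees the global data $\{d_\sigma\}_{\sigma\in\Sigma^h\cup\Sigma_\partial^h}$ are consistent and recover $\ble^h$ globally.

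For the dimension formula I would use the mapping $\bv^h\mapsto\bigtriangledown_s\bv^h$. Restricted to $\boldsymbol{V}^h$, this linear map has image $\hat{\underline{\mathbb{E}}}^h$ by definition (since $\bigtriangledown_s$ kills $\bR(\Omega)$, it descends to $\dot{\boldsymbol{V}}^h$ with the same image). Its kernel on $\boldsymbol{V}^h$ is $\{\bv^h\in\boldsymbol{V}^h:\bigtriangledown_s\bv^h=0\}$. Because any $\bv^h$ with $\bigtriangledown_s\bv^h=0$ is a global infinitesimal rigid displacement, and conversely every element $\boldsymbol{a}+\boldsymbol{b}\wedge\boldsymbol{x}$ of $\bR(\Omega)$ is affine, hence lies in $\boldsymbol{V}^h$, the kernel equals $\bR(\Omega)$ exactly, which is $6$-dimensional (three translations, three infinitesimal rotations). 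The rank–nullity theorem then yields $\dim\hat{\underline{\mathbb{E}}}^h = \dim\boldsymbol{V}^h - 6$, which is $(\ref{dim_exact_space})$.

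I expect the main obstacle to be the single-valuedness step in the inclusion — specifically, verifying cleanly that the edge integral of $\boldsymbol{\tau}\cdot(\bigtriangledown_s\bv^h)\boldsymbol{\tau}$ depends only on $\bv^h|_\sigma$ and not on the ambient tetrahedron. The key observation is that the symmetric part contributes $\boldsymbol{\tau}\cdot(\bigtriangledown_s\bv^h)\boldsymbol{\tau} = \boldsymbol{\tau}^\top(\bigtriangledown \bv^h)\boldsymbol{\tau}$, which is precisely the directional derivative of the scalar function $\bv^h\cdot\boldsymbol{\tau}$ in the direction $\boldsymbol{\tau}$ tangent to $\sigma$; this quantity is intrinsic to the edge and is unaffected by how $\bv^h$ behaves transversally, so $C^0$-continuity across faces containing $\sigma$ is enough. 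Once this is isolated, everything else is bookkeeping: the local unisolvence is inherited from Lemma~\ref{lemma_dof}, and the dimension count is a one-line application of rank–nullity once the kernel is identified with $\bR(\Omega)$.
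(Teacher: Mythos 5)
Your proposal is correct and takes essentially the same route as the paper, whose proof simply invokes Lemma~\ref{lemma_dof} for the unisolvence and defers the inclusion \eqref{space_inclusion} and the dimension count \eqref{dim_exact_space} to the corresponding two-dimensional argument in Ciarlet \& Ciarlet, Jr. Your write-up supplies exactly those deferred details — the identity $\boldsymbol{\tau}\cdot(\bigtriangledown_s\bv^h)\boldsymbol{\tau}=\partial_{\boldsymbol{\tau}}(\bv^h\cdot\boldsymbol{\tau})$ giving single-valuedness from $C^0$-continuity, elementwise unisolvence, and rank--nullity with kernel $\bR(\Omega)$ of dimension $6$ — so nothing further is needed.
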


\begin{proof}
According to Lemma~\ref{lemma_dof}, each tensor field $\ble^{h}\in\tilde{\underline{\mathbb{E}}}^{h}$ 
is uniquely defined by the numbers $d_{\sigma}(\ble^{h})$ in (\ref{global_edge_dof}).  
The proof of (\ref{space_inclusion},\ref{dim_exact_space}) will be 
similar to that of Lemma $3.1$ in \cite{Ciarlet:2009:strain}.
\end{proof}

\begin{lemma}
\label{interior_boundary_vertex}
Given any triangulation $\mathcal{T}^{h}$ of $\bar{\Omega}$, for any vertex $\boldsymbol{a}$ found in $\mathcal{T}^{h}$,
let $\{T;T\in\mathcal{T}^{h}(a)\}$ denote the set formed by all the tetrahedrons of $\mathcal{T}^{h}$ that have 
the vertex $\boldsymbol{a}$ in common, and let $\bar{\Omega}_{\boldsymbol{a}}:=\overline{\bigcup_{T\in\mathcal{T}^{h}}T}$.

For any given vertex $\boldsymbol{a}$ found in $\mathcal{T}^{h}$, We denote $N$ number of vertices 
in $\bar{\Omega}_{\boldsymbol{a}}$, $A$ number of edges in $\bar{\Omega}_{\boldsymbol{a}}$.
If $\boldsymbol{a}\in \Sigma^{h}$, then
\begin{equation}
\label{Euler_interior}
A = (3N-6)+(N_{b}-3),
\end{equation}
where, $N_{b}$ is number of vertices in the boundary of $\bar{\Omega}_{\boldsymbol{a}}$.
If $\boldsymbol{a}\in \Sigma_{\partial}^{h}$, then
\begin{equation}
\label{Euler_boundary}
A = (3N-6)+N_{ib},
\end{equation}
where, $N_{ib}$ is number of vertices in $\partial\bar{\Omega}_{\boldsymbol{a}}\backslash\partial\Omega$.
\end{lemma}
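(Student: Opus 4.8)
The plan is to use that $\bar{\Omega}_{\boldsymbol{a}}$ is the closed star of the vertex $\boldsymbol{a}$, hence, combinatorially, the cone with apex $\boldsymbol{a}$ over the link $L$ of $\boldsymbol{a}$ in $\mathcal{T}^{h}$ --- the subcomplex formed by all vertices, edges and faces of the tetrahedra in $\mathcal{T}^{h}(\boldsymbol{a})$ that do not contain $\boldsymbol{a}$. Reading off this cone structure, the vertices of $\bar{\Omega}_{\boldsymbol{a}}$ are $\boldsymbol{a}$ together with the $n$ vertices of $L$, and its edges are exactly the $a$ edges of $L$ together with the $n$ ``spokes'' $[\boldsymbol{a},v]$, $v$ a vertex of $L$. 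Thus $N=n+1$ and $A=a+n$, and the lemma reduces to expressing $a$ through $n$ by Euler's relation for $L$ (the link being a pure $2$-complex whose every simplex is a face of a triangle).

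First I would treat an interior vertex $\boldsymbol{a}$. Under the usual conditions on $\mathcal{T}^{h}$ the link $L$ is a triangulated $2$-sphere, and it coincides with the boundary surface $\partial\bar{\Omega}_{\boldsymbol{a}}$, so that $n=N_{b}$. Euler's formula $n-a+f=2$ for $L$, together with the all-triangles identity $2a=3f$ ($f$ being the number of faces of $L$), gives $a=3n-6$; hence $A=a+n=(3N_{b}-6)+N_{b}$, and since $N_{b}=N-1$ this is precisely $A=(3N-6)+(N_{b}-3)$, i.e.\ $(\ref{Euler_interior})$.

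Next, a boundary vertex $\boldsymbol{a}$. Then $L$ is a triangulated $2$-disk whose boundary polygon $\partial L$ lies on $\partial\Omega$; write $m$ for the number of vertices, equivalently of edges, of $\partial L$. Counting face--edge incidences gives $3f=2(a-m)+m=2a-m$, and together with $n-a+f=1$ this yields $a=3n-m-3$. Hence $A=a+n=3(n+1)-3+(n-m)=(3N-6)+(n-m)$, and it remains to recognise $n-m$ as $N_{ib}$. The vertices of $\partial\bar{\Omega}_{\boldsymbol{a}}$ are $\boldsymbol{a}$, the $m$ vertices on $\partial L$, and the $n-m$ vertices interior to the disk $L$; the first two families lie on $\partial\Omega$, while a vertex interior to the link of a boundary vertex is an interior vertex of $\mathcal{T}^{h}$ and so lies in $\Omega$, not on $\partial\Omega$. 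Therefore $N_{ib}=n-m$ and $A=(3N-6)+N_{ib}$, i.e.\ $(\ref{Euler_boundary})$.

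The one step I would flag as non-routine is the topological input used above: for an admissible triangulation of the polyhedral domain $\bar{\Omega}$, the link of an interior vertex is a triangulated $2$-sphere, the link of a boundary vertex is a triangulated $2$-disk with $\partial L\subset\partial\Omega$, and a vertex interior to such a link disk is an interior vertex of $\mathcal{T}^{h}$. These facts record that $\bar{\Omega}$ carries the structure of a combinatorial $3$-manifold with boundary, which is part of the standing hypotheses on $\mathcal{T}^{h}$; I would invoke them rather than reprove them, after which everything else is the two elementary Euler computations. This is the three-dimensional counterpart of Lemma~3.1 of \cite{Ciarlet:2009:strain}, the new ingredient being that the relevant Euler relation is that of a triangulated sphere (resp.\ disk) rather than of a circle (resp.\ arc).
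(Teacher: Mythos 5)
Your argument is correct in substance and is, at bottom, the same computation as the paper's: the paper's key step for a boundary vertex is the relation
$A_{ib}+(N-N_{ib}-1)=N_{ib}+2(N-1)-3$, which is exactly the Euler/incidence identity for the triangulated disk $\partial\bar{\Omega}_{\boldsymbol{a}}\backslash\partial\Omega$ (the paper imports it as formula (3.12) of \cite{Ciarlet:2009:strain}), combined with the same star--cone bookkeeping you do via $N=n+1$, $A=n+a$. The only organizational difference is the interior case: you apply Euler's formula for the link sphere directly ($n-a+f=2$, $2a=3f$), whereas the paper reduces the interior vertex to the boundary case by deleting one tetrahedron $T\in\mathcal{T}^h(\boldsymbol{a})$ and treating $\partial\bar{\Omega}_{\boldsymbol{a}}\backslash\partial T$ like $\partial\bar{\Omega}_{\boldsymbol{a}}\backslash\partial\Omega$. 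Your version is arguably cleaner, since it avoids citing the two-dimensional paper and makes the topological input (sphere vs.\ disk) explicit.

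One step you flagged does deserve the flag, and in fact is not literally true: ``a vertex interior to the link of a boundary vertex is an interior vertex of $\mathcal{T}^{h}$.'' An admissible triangulation may contain an interior edge whose two endpoints both lie on $\partial\Omega$ (e.g.\ the main diagonal of a cube cut into six tetrahedra, or edges cutting across a geometric edge of the polyhedron); if $[\boldsymbol{a},v]$ is such an edge, then $v$ is interior to the link disk of $\boldsymbol{a}$ yet $v\in\partial\Omega$, so $v\notin\partial\bar{\Omega}_{\boldsymbol{a}}\backslash\partial\Omega$ as a point set. In that situation your identity $A=(3N-6)+(n-m)$ is still the correct one, but it forces $N_{ib}$ to be read combinatorially, as the number of link vertices not on the boundary cycle of the link, rather than set-theoretically as stated in the lemma (with the set-theoretic reading the formula itself fails for such meshes, as the cube example shows: $A=19$, $3N-6=18$, but $\partial\bar{\Omega}_{\boldsymbol{a}}\backslash\partial\Omega$ contains no vertex). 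The paper's proof silently makes the same identification, so this is as much an imprecision of the lemma's wording as of your proof; but to be airtight you should either state $N_{ib}$ combinatorially or add the hypothesis that no interior edge of $\mathcal{T}^h$ joins two boundary vertices of the star. Everything else in your computation checks out.
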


\begin{proof}
If $\boldsymbol{a}\in\Sigma_{\partial}^{h}$, we denote $A_{ib}$ number of edges found in 
$\partial\bar{\Omega}_{\boldsymbol{a}}\backslash\partial\Omega$. Then,
\begin{equation*}
A = A_{ib}+2(N-N_{ib}-1)+N_{ib}.
\end{equation*}
And, we utilize ($3.12$) in \cite{Ciarlet:2009:strain} for $\partial\bar{\Omega}_{a}
\backslash\partial\Omega$, then 
\begin{equation*}
A_{ib}+(N-N_{ib}-1)=N_{ib}+2(N-1)-3.
\end{equation*}
So, we have that
\begin{align*}
A & = A_{ib}+2(N-N_{ib}-1)+N_{ib} \\
& = [N_{ib}+2(N-1)-(N-N_{ib})-2]+2(N-N_{ib}-1)+N_{ib}\\
& = (3N-6)+N_{ib}.
\end{align*}
This proves (\ref{Euler_boundary}).

If $\boldsymbol{a}\in\Sigma^{h}$, we take $T\in \mathcal{T}^{h}(\boldsymbol{a})$ arbitrarily.
Then, it is easy to see that $A$ and $N$ are the same number of edges 
and number of vertices in $\bar{\Omega}_{\boldsymbol{a}}\backslash T$, respectively.
We treat $\partial\bar{\Omega}_{\boldsymbol{a}}\backslash \partial T$ the same 
as $\partial\bar{\Omega}_{\boldsymbol{a}}\backslash\partial\Omega$ in above argument.
This will immediately prove (\ref{Euler_interior}).
\end{proof}

\begin{theorem}
\label{computational_space}
Given any vertex $\boldsymbol{a}\in A_{\partial}^{h}\cup A^{h}$, there exist $m$-many linearly independent 
linear forms $\varphi_{\boldsymbol{a},j}:\tilde{\underline{\mathbb{E}}}^{h}\rightarrow\mathbb{R}$ 
($1\leq j\leq m$) such that 
\begin{equation*}
\textbf{curl }\textbf{curl }\ble^{h} = 0 \quad \text{ in }\mathcal{D}^{\prime}(\Omega)\quad 
\text{ for all }\ble^{h}\in\underline{\mathbb{E}}^{h},
\end{equation*}
where the space $\underline{\mathbb{E}}^{h}$ is defined by 
\begin{align}
\label{def_computational_space}
\underline{\mathbb{E}}^{h}:=\{ & \ble^{h}\in \tilde{\underline{\mathbb{E}}}^{h}; 
\varphi_{\boldsymbol{a},j}(\ble^{h})=0, 1\leq j\leq m\text{ for all }
\boldsymbol{a}\in A_{\partial}^{h}\cup A^{h}\}.
\end{align}
Here, $m=N_{ib}$ if $\boldsymbol{a}\in A_{\partial}^{h}$, and $m=N_{b}-3$ if $\boldsymbol{a}\in A^{h}$.  
$N_{ib}$ and $N_{b}$ are defined in Lemma~\ref{interior_boundary_vertex}.

More specifically, the coefficients of each linear form $\varphi_{\boldsymbol{a},j}$ are 
explicitly computable functions of coordinates of the vertex $\boldsymbol{a}$ and the 
vertices of the tetrahedrons of $\mathcal{T}^{h}$ that have $\boldsymbol{a}$ as a vertex.

In addition, if the domain $\Omega$ is simply-connected, 
then $\underline{\mathbb{E}}^{h}=\hat{\underline{\mathbb{E}}}^{h}$.
\end{theorem}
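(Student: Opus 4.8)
The plan is to localize the condition $\textbf{curl }\textbf{curl }\ble^{h}=0$ to a neighbourhood of each vertex, mirroring the two–dimensional construction of \cite{Ciarlet:2009:strain}; the genuinely three–dimensional input will be the Euler-type identities of Lemma~\ref{interior_boundary_vertex}. For a vertex $\boldsymbol{a}$ of $\mathcal{T}^{h}$ write $\omega_{\boldsymbol{a}}:=\mathrm{int}(\bar{\Omega}_{\boldsymbol{a}})$ and let $\boldsymbol{V}_{\boldsymbol{a}}^{h}$ denote the continuous, piecewise-$P_{1}$ vector fields on $\bar{\Omega}_{\boldsymbol{a}}$. The first step is the equivalence, valid for every piecewise-constant $\ble^{h}\in\mathbb{L}_{s}^{2}(\Omega)$,
$$
\textbf{curl }\textbf{curl }\ble^{h}=0\ \text{ in }\mathcal{D}^{\prime}(\Omega)\iff \ble^{h}|_{\omega_{\boldsymbol{a}}}\in\bigtriangledown_{s}(\boldsymbol{V}_{\boldsymbol{a}}^{h})\ \text{ for every vertex }\boldsymbol{a}.
$$
The implication $\Leftarrow$ holds because $\textbf{curl }\textbf{curl }\bigtriangledown_{s}\bv=0$ identically and because the open sets $\omega_{\boldsymbol{a}}$ cover $\Omega$: every point of $\Omega$ lies in the relative interior of a unique simplex $S$ of $\mathcal{T}^{h}$, every tetrahedron that contains $S$ has each vertex $\boldsymbol{a}$ of $S$ as a vertex, and the union of these tetrahedra is a neighbourhood of the point contained in $\bar{\Omega}_{\boldsymbol{a}}$. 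The implication $\Rightarrow$ follows from Theorem~2.1 applied on $\omega_{\boldsymbol{a}}$, which under the usual assumptions on $\mathcal{T}^{h}$ is a simply-connected Lipschitz domain: it yields $\bv\in H^{1}(\omega_{\boldsymbol{a}};\RRR^{3})$ with $\bigtriangledown_{s}\bv=\ble^{h}$, and since $\bigtriangledown_{s}\bv$ is constant on each tetrahedron, $\bv$ is affine on each tetrahedron, hence — being $H^{1}$ — continuous, so $\bv\in\boldsymbol{V}_{\boldsymbol{a}}^{h}$.

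Fix now a vertex $\boldsymbol{a}$. Restricted to $\omega_{\boldsymbol{a}}$, the space $\tilde{\underline{\mathbb{E}}}^{h}$ carries exactly one degree of freedom $d_{\sigma}$ per edge $\sigma$ of $\bar{\Omega}_{\boldsymbol{a}}$, by the same unisolvency as in Lemma~\ref{lemma_dof} and Theorem~\ref{def_spaces}, so $\dim\tilde{\underline{\mathbb{E}}}^{h}|_{\omega_{\boldsymbol{a}}}=A$. Moreover $\bigtriangledown_{s}(\boldsymbol{V}_{\boldsymbol{a}}^{h})$ is a subspace of $\tilde{\underline{\mathbb{E}}}^{h}|_{\omega_{\boldsymbol{a}}}$ of dimension $\dim\boldsymbol{V}_{\boldsymbol{a}}^{h}-\dim\bR(\omega_{\boldsymbol{a}})=3N-6$, because $\bigtriangledown_{s}$ is injective on $\boldsymbol{V}_{\boldsymbol{a}}^{h}/\bR(\omega_{\boldsymbol{a}})$. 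Hence its codimension equals $m:=A-(3N-6)$, which by $(\ref{Euler_interior})$–$(\ref{Euler_boundary})$ equals $N_{b}-3$ when $\boldsymbol{a}\in A^{h}$ and $N_{ib}$ when $\boldsymbol{a}\in A_{\partial}^{h}$. I then take $\varphi_{\boldsymbol{a},1},\dots,\varphi_{\boldsymbol{a},m}$ to be any basis of the annihilator of $\bigtriangledown_{s}(\boldsymbol{V}_{\boldsymbol{a}}^{h})$ in $(\tilde{\underline{\mathbb{E}}}^{h}|_{\omega_{\boldsymbol{a}}})^{*}$, viewed as linear forms on $\tilde{\underline{\mathbb{E}}}^{h}$; their common kernel is precisely $\{\ble^{h}:\ble^{h}|_{\omega_{\boldsymbol{a}}}\in\bigtriangledown_{s}(\boldsymbol{V}_{\boldsymbol{a}}^{h})\}$. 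Explicitly, choosing a basis $\{\phi_{k}\}$ of $\boldsymbol{V}_{\boldsymbol{a}}^{h}$, the numbers $d_{\sigma}(\bigtriangledown_{s}\phi_{k})=\int_{\sigma}\boldsymbol{\tau}\cdot\bigtriangledown_{s}\phi_{k}\,\boldsymbol{\tau}\,dl$ are computable rational expressions in the coordinates of $\boldsymbol{a}$ and of the vertices of the tetrahedra of $\mathcal{T}^{h}$ having $\boldsymbol{a}$ as a vertex, and the coefficients of the $\varphi_{\boldsymbol{a},j}$ are obtained from the left null space of the matrix $(d_{\sigma}(\bigtriangledown_{s}\phi_{k}))$ by linear algebra. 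Together with the first step, this shows $\textbf{curl }\textbf{curl }\ble^{h}=0$ in $\mathcal{D}^{\prime}(\Omega)$ for every $\ble^{h}\in\underline{\mathbb{E}}^{h}$.

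For the last assertion, suppose $\Omega$ is simply-connected. By the first step and the definition of $\underline{\mathbb{E}}^{h}$ one has $\underline{\mathbb{E}}^{h}=\{\ble^{h}\in\tilde{\underline{\mathbb{E}}}^{h};\textbf{curl }\textbf{curl }\ble^{h}=0\text{ in }\mathcal{D}^{\prime}(\Omega)\}$. Running the argument of the first step globally on $\Omega$ (Theorem~2.1 applied to $\Omega$, together with the affine/continuity remark) identifies this set with $\{\bigtriangledown_{s}\bv;\bv\in\boldsymbol{V}^{h}\}=\hat{\underline{\mathbb{E}}}^{h}$, the inclusion $(\ref{space_inclusion})$ ensuring that the right-hand side lies in $\tilde{\underline{\mathbb{E}}}^{h}$. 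Hence $\underline{\mathbb{E}}^{h}=\hat{\underline{\mathbb{E}}}^{h}$.

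I expect the main obstacle to be the counting step above — the book-keeping that gives $\dim\tilde{\underline{\mathbb{E}}}^{h}|_{\omega_{\boldsymbol{a}}}=A$ and matches the codimension $A-(3N-6)$ with the combinatorial counts $N_{b}-3$ and $N_{ib}$ of Lemma~\ref{interior_boundary_vertex} — together with the analytic point of verifying that each vertex patch $\omega_{\boldsymbol{a}}$ is genuinely a simply-connected Lipschitz domain, so that the local version of Theorem~2.1 may be invoked. Both are the three-dimensional counterparts of the technical core of \cite{Ciarlet:2009:strain}; once they are in place, the remaining steps are elementary linear algebra and a direct application of Theorem~2.1.
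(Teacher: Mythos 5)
Your proposal is correct and follows essentially the same route as the paper: a local dimension count on each vertex patch $\bar{\Omega}_{\boldsymbol{a}}$ (using the Euler-type identities of Lemma~\ref{interior_boundary_vertex} to identify the codimension $m$), the choice of the $\varphi_{\boldsymbol{a},j}$ as a basis of the annihilator of the local gradient space, the covering $\Omega=\bigcup_{\boldsymbol{a}}\mathrm{int}(\bar{\Omega}_{\boldsymbol{a}})$ to globalize $\textbf{curl }\textbf{curl }\ble^{h}=0$, and a global application of Theorem 2.1 for the simply-connected identification $\underline{\mathbb{E}}^{h}=\hat{\underline{\mathbb{E}}}^{h}$. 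You spell out several points the paper only sketches (the local $\Leftarrow$/$\Rightarrow$ equivalence, the piecewise-affine/continuity argument, and the explicit computability of the coefficients), and the extra local use of Theorem 2.1 on the star-shaped (hence simply-connected) patches is harmless though only the global version is actually needed.
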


\begin{proof}
Given any vertex $\boldsymbol{a}\in A_{\partial}^{h}\cup A^{h}$, we denote $\bar{\Omega}_{\boldsymbol{a}}$ 
the closure of union of all tetrahedrons which have vertex $\boldsymbol{a}$. It is easy to see that 
\begin{equation*}
\dim \tilde{\underline{\mathbb{E}}}^{h}|_{\bar{\Omega}_{\boldsymbol{a}}} 
= \text{number of edges in }\bar{\Omega}_{\boldsymbol{a}},\qquad 
\dim \hat{\underline{\mathbb{E}}}^{h}|_{\bar{\Omega}_{\boldsymbol{a}}} 
= 3\times(\text{number of vertices in }\bar{\Omega}_{\boldsymbol{a}})-6.
\end{equation*}
Then, by Lemma~\ref{interior_boundary_vertex},
\begin{equation*}
\dim \tilde{\underline{\mathbb{E}}}^{h}|_{\bar{\Omega}_{\boldsymbol{a}}}
 - \dim \hat{\underline{\mathbb{E}}}^{h}|_{\bar{\Omega}_{\boldsymbol{a}}} 
= N_{ib} \text{ if }\boldsymbol{a}\in A_{\partial}^{h},\qquad 
\dim \tilde{\underline{\mathbb{E}}}^{h}|_{\bar{\Omega}_{\boldsymbol{a}}}
 - \dim \hat{\underline{\mathbb{E}}}^{h}|_{\bar{\Omega}_{\boldsymbol{a}}} = N_{b}-3 
 \text{ if }\boldsymbol{a}\in A^{h}. 
\end{equation*}
This implies that there exist $m$-many linearly independent linear form 
$\varphi_{\boldsymbol{a},j}:\tilde{\underline{\mathbb{E}}}^{h}|_{\bar{\Omega}_{\boldsymbol{a}}}\rightarrow 
\mathbb{R}$ ($1\leq j\leq m$) such that $\hat{\underline{\mathbb{E}}}^{h}|_{\bar{\Omega}_{\boldsymbol{a}}}$ 
is equal to
\begin{equation}
\label{patch_computational_space}
\{\ble^{h}\in \tilde{\underline{\mathbb{E}}}^{h}|_{\bar{\Omega}_{\boldsymbol{a}}};
\varphi_{\boldsymbol{a},j}(\ble^{h})=0,1\leq j\leq m\}.
\end{equation}

Now, we take $\ble^{h}\in \tilde{\mathbb{E}}^{h}$ arbitrarily. 
If $\varphi_{\boldsymbol{a},j}(\bold{e}^{h}|_{\bar{\Omega}_{a}})=0$ for all 
$1\leq j\leq m$, then we have that $\ble^{h}|_{\bar{\Omega}_{\boldsymbol{a}}}\in 
\hat{\underline{\mathbb{E}}}^{h}|_{\bar{\Omega}_{\boldsymbol{a}}}$. This implies that
\begin{equation*}
\textbf{curl }\textbf{curl }\ble^{h} = 0\text{ in }\mathcal{D}^{\prime}(\text{int}(\bar{\Omega}_{\boldsymbol{a}})).
\end{equation*}
We notice that $\Omega = \bigcup_{\boldsymbol{a}\in A_{\partial}^{h}\cup A^{h}}\text{int}
(\bar{\Omega}_{\boldsymbol{a}})$. So, we can conclude that 
\begin{equation*}
\textbf{curl }\textbf{curl }\ble^{h} = 0\text{ in }\mathcal{D}^{\prime}(\Omega).
\end{equation*}
And, for any $\ble^{h}\in \tilde{\underline{\mathbb{E}}}^{h}$ and 
any $\boldsymbol{a}\in A_{\partial}^{h}\cup A^{h}$, $1\leq j\leq m$, we can define 
\begin{equation*}
\varphi_{\boldsymbol{a},j}(\ble^{h}):=\varphi_{\boldsymbol{a},j}(\ble^{h}|_{\bar{\Omega}_{\boldsymbol{a}}}).
\end{equation*}
So, we can conclude that
\begin{equation*}
\textbf{curl }\textbf{curl }\ble^{h} = 0 \quad \text{ in }\mathcal{D}^{\prime}(\Omega)\quad 
\text{ for all }\ble^{h}\in\underline{\mathbb{E}}^{h}.
\end{equation*}
Obviously, $\hat{\underline{\mathbb{E}}}^{h}\subset \underline{\mathbb{E}}^{h}$.

If the domain $\Omega$ is simply-connected, then using the same argument in the last paragraph of the proof of Theorem $3.3$, 
we an conclude that $\underline{\mathbb{E}}^{h}=\hat{\underline{\mathbb{E}}}^{h}$.
\end{proof}

\section{The discrete problem and convergence}
In the following, Let $\Omega$ be a simply-connected polygonal domain in $\mathbb{R}^3$. 
The discrete problem is defined as the minimization problem (See $(4.1)$) below.
\begin{theorem}
\label{dis_mini_pro}
Given any triangulation $\Gamma_h$ of $\overline{\Omega}$, let $\underline{\mathbb{E}}^h$ be 
the finite element space defined in ~(\ref{def_computational_space}). Then there exists one and 
only one $\underline{\boldsymbol{\varepsilon}}^h\in \underline{\mathbb{E}}^h$ such that
\begin{equation}
j(\underline{\boldsymbol{\varepsilon}}^h)=\inf_{\ble^h\in \underline{\mathbb{E}}^h}j(\ble^h),
\end{equation}
where $j$ is defined in $(\ref{new_mini_prob})$. Besides, let J be the function defined in $(\ref{classical_mini_prob})$, 
then $\underline{\boldsymbol{\varepsilon}}^h=\bigtriangledown_s\dot{\bu}^h$, where $\dot{\bu}^h$ is 
the unique solution to the minimization problem
\begin{equation}
J(\dot{\bu}^h)=\inf_{\cdot{\bv}^h\in \boldsymbol{V}^h}J(\dot{\bv}^h).
\end{equation}
\end{theorem}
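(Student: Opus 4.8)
The plan is to reduce the discrete minimization problem to its displacement counterpart on $\boldsymbol{V}^{h}$, exactly as Theorem~$2.3$ reduces $(\ref{new_mini_prob})$ to $(\ref{classical_mini_prob})$ in the continuous setting. First I would use the standing hypothesis that $\Omega$ is simply-connected together with Theorem~\ref{computational_space} to get $\underline{\mathbb{E}}^{h}=\hat{\underline{\mathbb{E}}}^{h}$. Hence, by the definition $(\ref{exact_space})$ of $\hat{\underline{\mathbb{E}}}^{h}$, the linear map $\Phi:\dot{\boldsymbol{V}}^{h}\to\underline{\mathbb{E}}^{h}$, $\Phi(\dot{\bv}^{h}):=\bigtriangledown_{s}\dot{\bv}^{h}$, is well defined (the symmetrized gradient annihilates $\bR(\Omega)$, so the value is independent of the representative of $\dot{\bv}^{h}\in\dot{\boldsymbol{V}}^{h}=\boldsymbol{V}^{h}/\bR(\Omega)$), surjective, and injective (if $\bigtriangledown_{s}\dot{\bv}^{h}=0$ then $\dot{\bv}^{h}\in\bR(\Omega)$, i.e. $\dot{\bv}^{h}=0$ in $\dot{\boldsymbol{V}}^{h}$). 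Since every $\ble^{h}\in\underline{\mathbb{E}}^{h}$ satisfies $\textbf{curl }\textbf{curl }\ble^{h}=0$ by Theorem~\ref{computational_space}, hence $\ble^{h}\in\blE(\Omega)$, the uniqueness clause of Theorem~$2.1$ together with Theorem~$2.2$ identifies $\Phi$ with the inverse of the restriction of $\kappa$ to $\hat{\underline{\mathbb{E}}}^{h}$; in particular $\kappa(\Phi(\dot{\bv}^{h}))=\dot{\bv}^{h}$ in $\dot{H}^{1}(\Omega;\mathbb{R}^{3})$ for all $\dot{\bv}^{h}\in\dot{\boldsymbol{V}}^{h}$.

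Next I would check that $\Phi$ carries $j$ onto $J$. For $\dot{\bv}^{h}\in\dot{\boldsymbol{V}}^{h}$ and $\ble^{h}=\Phi(\dot{\bv}^{h})$ one has $\int_{\Omega}A\ble^{h}:\ble^{h}\,dx=\int_{\Omega}A\bigtriangledown_{s}\dot{\bv}^{h}:\bigtriangledown_{s}\dot{\bv}^{h}\,dx$ directly, while $l(\ble^{h})=L\circ\kappa(\ble^{h})=L(\dot{\bv}^{h})$; here $L(\dot{\bv}^{h})$ and $J(\dot{\bv}^{h})$ are unambiguous on the quotient $\dot{\boldsymbol{V}}^{h}$ precisely because the compatibility condition $L(\boldsymbol{r})=0$ for all $\boldsymbol{r}\in\bR(\Omega)$ lets $L$, and hence $J$, descend to $\dot{\boldsymbol{V}}^{h}$. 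Therefore $j(\Phi(\dot{\bv}^{h}))=J(\dot{\bv}^{h})$ for all $\dot{\bv}^{h}\in\dot{\boldsymbol{V}}^{h}$, so that $\inf_{\ble^{h}\in\underline{\mathbb{E}}^{h}}j(\ble^{h})=\inf_{\dot{\bv}^{h}\in\dot{\boldsymbol{V}}^{h}}J(\dot{\bv}^{h})=\inf_{\bv^{h}\in\boldsymbol{V}^{h}}J(\bv^{h})$, the last equality by the $\bR(\Omega)$-invariance of $J$, and a field $\underline{\boldsymbol{\varepsilon}}^{h}\in\underline{\mathbb{E}}^{h}$ minimizes $j$ over $\underline{\mathbb{E}}^{h}$ if and only if $\underline{\boldsymbol{\varepsilon}}^{h}=\Phi(\dot{\bu}^{h})=\bigtriangledown_{s}\dot{\bu}^{h}$ with $\dot{\bu}^{h}$ a minimizer of $J$ over $\dot{\boldsymbol{V}}^{h}$. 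It therefore suffices to prove existence and uniqueness of the latter.

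Here, where the continuous problem invokes Korn's inequality, the argument is elementary because $\dot{\boldsymbol{V}}^{h}$ is finite-dimensional. Writing $J(\dot{\bv}^{h})=\frac{1}{2}a(\dot{\bv}^{h},\dot{\bv}^{h})-L(\dot{\bv}^{h})$ with $a(\dot{\bv},\dot{\boldsymbol{w}}):=\int_{\Omega}A\bigtriangledown_{s}\dot{\bv}:\bigtriangledown_{s}\dot{\boldsymbol{w}}\,dx$, and using that $\lambda,\mu>0$ give $A\ble:\ble=\lambda(\text{tr}\,\ble)^{2}+2\mu|\ble|^{2}\ge 2\mu|\ble|^{2}$ for all $\ble\in\mathbb{S}^{3}$, the form $a$ is symmetric and positive semidefinite, and $a(\dot{\bv}^{h},\dot{\bv}^{h})=0$ forces $\bigtriangledown_{s}\dot{\bv}^{h}=0$, i.e. $\dot{\bv}^{h}=0$ in $\dot{\boldsymbol{V}}^{h}$; thus $a$ is positive definite on $\dot{\boldsymbol{V}}^{h}$, which in finite dimensions is coercivity. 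Hence $J$ is a strictly convex coercive quadratic functional on $\dot{\boldsymbol{V}}^{h}$, so it has a unique minimizer $\dot{\bu}^{h}$, equivalently the unique solution of $a(\dot{\bu}^{h},\dot{\bv}^{h})=L(\dot{\bv}^{h})$ for all $\dot{\bv}^{h}\in\dot{\boldsymbol{V}}^{h}$ (Lax--Milgram in finite dimensions, or invertibility of the Gram matrix of $a$), and $J(\dot{\bu}^{h})=\inf_{\bv^{h}\in\boldsymbol{V}^{h}}J(\bv^{h})$. Pulling $\dot{\bu}^{h}$ back through $\Phi$ gives the unique minimizer $\underline{\boldsymbol{\varepsilon}}^{h}=\bigtriangledown_{s}\dot{\bu}^{h}$ asserted in the statement. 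I do not expect a genuine obstacle; the only step demanding care is the quotient/compatibility bookkeeping that makes $l$, $L$, $J$ well defined on $\dot{\boldsymbol{V}}^{h}$ and makes $\Phi$ an honest bijection intertwining $j$ and $J$ — the whole argument being the finite-dimensional shadow of Theorems~$2.1$--$2.3$.
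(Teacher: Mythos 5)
Your proposal is correct and takes essentially the approach the paper relies on: the paper's "proof" is a citation to Theorem 4.1 of Ciarlet \& Ciarlet, Jr.\ (2009), whose argument is exactly this reduction of $j$ on $\underline{\mathbb{E}}^{h}=\hat{\underline{\mathbb{E}}}^{h}$ (via simple-connectedness and Theorem~\ref{computational_space}) to $J$ on $\dot{\boldsymbol{V}}^{h}$ through the bijection $\dot{\bv}^{h}\mapsto\bigtriangledown_{s}\dot{\bv}^{h}$, using the compatibility condition to make $L$ descend to the quotient. Your use of finite-dimensional positive definiteness in place of Korn's inequality for the existence and uniqueness of $\dot{\bu}^{h}$ is an inessential (and harmless) simplification of the same scheme.
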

\begin{proof}
The proof is the same as Theorem $4.1$ in \cite{Ciarlet:2009:strain}.
\end{proof}

Finally, we discuss the convergence of the method.
\begin{theorem}
\label{convergence_pro}
Consider a regular family of triangulation $\Gamma_h$ of $\overline{\Omega}$. Then
\begin{equation}
\Vert\boldsymbol{\underline{\varepsilon}}-\boldsymbol{\underline{\varepsilon}}^h\Vert_{L_s^2}
\rightarrow 0\text{ as }h\rightarrow 0.
\end{equation}
If $\bu\in H^2(\Omega;\mathbb{R}^{3})$, there exists a constant $C$ independent of $h$ such that
\begin{equation}
\Vert\boldsymbol{\underline{\varepsilon}}-\boldsymbol{\underline{\varepsilon}}^h\Vert_{L_s^2}
\leq C\Vert \bu\Vert_{H^2(\Omega)}h.
\end{equation}
\begin{proof}
The proof is the same as Theorem $4.2$ in \cite{Ciarlet:2009:strain}.
\end{proof}
\end{theorem}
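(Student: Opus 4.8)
The plan is to read the discrete minimisation problem as a \emph{conforming} Galerkin approximation of (\ref{new_mini_prob}) and then run the standard \Cea/best-approximation argument, so that everything reduces to an interpolation estimate. Write $a(\ble,\underline{\boldsymbol{\eta}}):=\int_{\Omega}A\ble:\underline{\boldsymbol{\eta}}\,dx$. The minimiser of (\ref{new_mini_prob}) is characterised by $a(\underline{\boldsymbol{\varepsilon}},\ble)=l(\ble)$ for all $\ble\in\blE(\Omega)$, and $\underline{\boldsymbol{\varepsilon}}^{h}$ by $a(\underline{\boldsymbol{\varepsilon}}^{h},\ble^{h})=l(\ble^{h})$ for all $\ble^{h}\in\underline{\mathbb{E}}^{h}$. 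Since $\Omega$ is simply connected, Theorem~\ref{computational_space} gives $\underline{\mathbb{E}}^{h}=\hat{\underline{\mathbb{E}}}^{h}$, and every symmetric gradient is curl-curl-free, so $\underline{\mathbb{E}}^{h}\subset\blE(\Omega)$; subtracting the two identities gives the orthogonality $a(\underline{\boldsymbol{\varepsilon}}-\underline{\boldsymbol{\varepsilon}}^{h},\ble^{h})=0$ for all $\ble^{h}\in\underline{\mathbb{E}}^{h}$. Because $\lambda,\mu>0$, $a$ is $L_{s}^{2}$-coercive, $a(\ble,\ble)=\lambda\int_{\Omega}(\trace\ble)^{2}\,dx+2\mu\|\ble\|_{L_{s}^{2}}^{2}\ge 2\mu\|\ble\|_{L_{s}^{2}}^{2}$, and bounded, $|a(\ble,\underline{\boldsymbol{\eta}})|\le(3\lambda+2\mu)\|\ble\|_{L_{s}^{2}}\|\underline{\boldsymbol{\eta}}\|_{L_{s}^{2}}$; feeding the orthogonality into coercivity in the usual way yields
\[
\|\underline{\boldsymbol{\varepsilon}}-\underline{\boldsymbol{\varepsilon}}^{h}\|_{L_{s}^{2}}\;\le\;\frac{3\lambda+2\mu}{2\mu}\,\inf_{\ble^{h}\in\underline{\mathbb{E}}^{h}}\|\underline{\boldsymbol{\varepsilon}}-\ble^{h}\|_{L_{s}^{2}}.
\]
(Equivalently, one may invoke Theorem~\ref{dis_mini_pro} and view the same quantity as $\|\bigtriangledown_{s}(\bu-\dot{\bu}^{h})\|_{L_{s}^{2}}$ with $\dot{\bu}^{h}$ the Ritz projection of $\bu$ onto $\dot{\boldsymbol{V}}^{h}$.)

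Next I would bound the right-hand side. Let $I_{h}$ denote nodal Lagrange interpolation onto $\boldsymbol{V}^{h}$. When $\bu\in H^{2}(\Omega;\mathbb{R}^{3})$, the Sobolev embedding $H^{2}(\Omega)\hookrightarrow C^{0}(\bar{\Omega})$, valid in three dimensions, makes $I_{h}\bu\in\boldsymbol{V}^{h}$ well defined, so $\bigtriangledown_{s}(I_{h}\bu)\in\hat{\underline{\mathbb{E}}}^{h}=\underline{\mathbb{E}}^{h}$. Using $\underline{\boldsymbol{\varepsilon}}=\bigtriangledown_{s}\bu$ (Section~2) and taking $\ble^{h}=\bigtriangledown_{s}(I_{h}\bu)$,
\[
\|\underline{\boldsymbol{\varepsilon}}-\underline{\boldsymbol{\varepsilon}}^{h}\|_{L_{s}^{2}}\;\le\;\frac{3\lambda+2\mu}{2\mu}\,\|\bigtriangledown_{s}(\bu-I_{h}\bu)\|_{L_{s}^{2}}\;\le\;\frac{3\lambda+2\mu}{2\mu}\,\|\bu-I_{h}\bu\|_{H^{1}(\Omega)}.
\]
For a regular family of triangulations the classical interpolation estimate $\|\bu-I_{h}\bu\|_{H^{1}(\Omega)}\le C h\,|\bu|_{H^{2}(\Omega)}\le C h\,\|\bu\|_{H^{2}(\Omega)}$ holds with $C$ depending only on the shape-regularity constant, which is the asserted rate.

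For the convergence assertion without extra regularity, only $\bu\in H^{1}(\Omega;\mathbb{R}^{3})$ is available and $I_{h}\bu$ need not be defined, so I would argue by density. Given $\delta>0$, since $\Omega$ is Lipschitz pick $\bu_{\delta}\in C^{\infty}(\bar{\Omega};\mathbb{R}^{3})$ with $\|\bu-\bu_{\delta}\|_{H^{1}(\Omega)}<\delta$, set $\ble_{\delta}^{h}:=\bigtriangledown_{s}(I_{h}\bu_{\delta})\in\hat{\underline{\mathbb{E}}}^{h}=\underline{\mathbb{E}}^{h}$, and estimate
\[
\|\underline{\boldsymbol{\varepsilon}}-\ble_{\delta}^{h}\|_{L_{s}^{2}}\;\le\;\|\bigtriangledown_{s}(\bu-\bu_{\delta})\|_{L_{s}^{2}}+\|\bigtriangledown_{s}(\bu_{\delta}-I_{h}\bu_{\delta})\|_{L_{s}^{2}}\;\le\;\delta+C h\,|\bu_{\delta}|_{H^{2}(\Omega)}.
\]
Inserting this into the best-approximation estimate, letting $h\to0$ and then $\delta\to0$, gives $\|\underline{\boldsymbol{\varepsilon}}-\underline{\boldsymbol{\varepsilon}}^{h}\|_{L_{s}^{2}}\to0$.

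The main difficulty is not really an obstacle here: the genuinely hard structural work — that the edge degrees of freedom force $\textbf{curl}\,\textbf{curl}\,\ble^{h}=0$ and that, on a simply-connected domain, $\underline{\mathbb{E}}^{h}$ coincides exactly with the space $\hat{\underline{\mathbb{E}}}^{h}$ of symmetric gradients of continuous piecewise-linear fields — has already been carried out in Theorem~\ref{computational_space}, and it is precisely this identification that supplies both conformity (so that \Cea{} applies) and a good approximant (the symmetric gradient of an interpolated displacement). The only steps needing care are: (i) the bookkeeping with the quotient by $\bR(\Omega)$, which is harmless because $\bigtriangledown_{s}$ annihilates rigid displacements; (ii) legitimacy of nodal interpolation, which in $\mathbb{R}^{3}$ rests on $H^{2}\hookrightarrow C^{0}$; and (iii) $h$-uniformity of the interpolation constant, which is exactly the regular-family hypothesis. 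These are standard, so the proof consists in assembling the ingredients above, as in Theorem~4.2 of \cite{Ciarlet:2009:strain}.
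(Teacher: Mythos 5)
Your argument is correct and is essentially the proof the paper defers to (Theorem 4.2 of \cite{Ciarlet:2009:strain}): conformity $\underline{\mathbb{E}}^{h}=\hat{\underline{\mathbb{E}}}^{h}\subset\blE(\Omega)$ on the simply-connected domain, a \Cea/best-approximation bound, nodal interpolation of $\bu\in H^{2}(\Omega;\mathbb{R}^{3})$ justified by $H^{2}\hookrightarrow C^{0}$ in three dimensions, and a density argument for the unconditional convergence. The only (harmless) variation is that you get coercivity of the bilinear form directly in $L_{s}^{2}$ on the strain space instead of passing through the displacement formulation and Korn's inequality on $\dot{H}^{1}(\Omega;\mathbb{R}^{3})$, which the equivalence $\underline{\boldsymbol{\varepsilon}}^{h}=\bigtriangledown_{s}\dot{\bu}^{h}$ of Theorem~\ref{dis_mini_pro} shows amounts to the same estimate.
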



\begin{thebibliography}{10}




\bibitem{Ciarlet:2009:strain} { Philippe G.~Ciarlet and Patrick Ciarlet, Jr.}, {\em Direct computation 
of stresses in planar linearized elasticity}, Math. Models Methods Appl. Sci., 19 (2009), pp.~1043--1064.

\bibitem{Ciarlet:2005:strain} { Philippe G.~Ciarlet and Patrick Ciarlet, Jr.}, {\em Another approach to linearized elasticity 
and a new proof of Korn's inequality}, Math. Models Methods Appl. Sci., 15 (2005), pp.~259--571.

\bibitem{Ciarlet:2007:strain} {C. Amrouche, Philippe G.~Ciarlet and Patrick Ciarlet, Jr.}, {\em Vector and scalar potentials, Poincar ́e’s theorem and Korn’s inequality}, C. R. Acad. Sci. Paris, Ser. I, 345 (2007), pp.~603--608.

\bibitem{Ciarlet:20072:strain} {Philippe.G.Ciarlet, Patrick.Ciarlet,Jr., G.Geymonat, F.Krasucki}, {\em Characterizationofthekernel of the operator CURL CURL}, C. R. Acad. Sci. Paris, Ser. I, 344 (2007), pp.~305--308.

\bibitem{Ciarlet:2006:strain} {C. Amrouche, Philippe. G. Ciarlet, L. Gratie, S. Kesavan}, {\em On the characterization of matrix fields as linearized strain tensor fields}, J. Math. Pures Appl, 86 (2006), pp.~116--132.

\bibitem{Duvaut & Lions :1972:strain} { G.Duvaut, J. L. Lions}, {\em Inequalities in Mechanics and Physics}, Springer-Verlag, 1976.

\bibitem{S. S. Antman:1976:strain} { S. S. Antman}, {\em Ordinary differential equations of nonlinear elasticity I: Foundations of the theories of non-linearly elastic rods and shells}, Arch. Rational Mech. Anal., 61 (1976), pp.~307--351.

\bibitem{G.Geymonat:2005:strain} { G.Geymonat,F.Krasucki}, {\em Some remarks on the compatibility conditions in elasticity}, Rend. Accad. Naz. Sci., XL123 (2005), pp.~175--182.

\bibitem{D.N.Arnold:1984:strain} { D.N.Arnold,F.Brezzi,J.Douglas,Jr.,PEERS}, {\em A new mixed finite element for plane elasticity}, Japan J. Appl. Math., 1 (1984), pp.~347--367.

\bibitem{D.N.Arnold:19842:strain} { D. N. Arnold, J. Douglas, Jr., C. P. Gupta}, {\em A family of higher order mixed finite element methods for plane elasticity}, Numer. Math., 45 (1984), pp.~1--22.

\bibitem{D.N.Arnold:1988:strain} { D. N. Arnold, R. S. Falk}, {\em A new mixed formulation for elasticity}, Numer. Math., 53 (1988), pp.~13--30.

\bibitem{D.N.Arnold:2002:strain} { D. N. Arnold, R. Winther}, {\em Mixed finite element methods for elasticity}, Numer. Math., 92 (2002), pp.~401--419.

\bibitem{D.N.Arnold:2003:strain} { D. N. Arnold, R. Winther}, {\em Mixed finite elements for elasticity in the stress- displacement formulation, in Current Trends in Scientific Computing (Z. Chen, R. Glowinski, K. Li, Editors)}, American Mathematical Society., 2003, pp.~33--42.

\bibitem{D.N.Arnold:2005:strain} { D. N. Arnold, G. Awanou}, {\em Rectangular mixed finite elements for elasticity}, Math. Models Methods Appl. Sci., 15 (2005), pp.~1417--1429.

\bibitem{J.C:1980:strain} { J. C. Nedelec}, {\em Mixed finite elements in R3}, Numer. Math., 35 (2005), pp.~315--341.

\bibitem{J.C:1986:strain} { J. C. Nedelec}, {\em A new family of mixed finite elements in R3}, Numer. Math., 50 (1986), pp.~57--81.















\end{thebibliography}
\end{document}